\newtheorem{theorem}{Theorem}[section]
\newtheorem{proposition}[theorem]{Proposition}
\newtheorem{corollary}[theorem]{Corollary}
\newtheorem{remark}[theorem]{Remark}
\newenvironment{proof}{\smallskip\par{\sc Proof.}\enspace}%
 {{\unskip\nobreak\hfil\penalty50\hskip2em
          \hbox{}\nobreak\hfil{\rule[-1pt]{5pt}{10pt}}
          \parfillskip=0pt\finalhyphendemerits=0
          \par\medskip}} 
\def\section{\@startsection {section}{1}{\z@}{3.25ex plus 1ex minus
 .2ex}{1.5ex plus .2ex}{\large\bf}}
\def\subsection{\@startsection{subsection}{2}{\z@}{3.25ex plus 1ex minus
 .2ex}{1.5ex plus .2ex}{\normalsize\bf}}
\begin{document}

\begin{center}
\LARGE\textsf{A comparison theorem for stochastic differential
equations under a Novikov-type condition}
\end{center}

\vspace*{.3in}

\begin{center}
\sc
\large{Alberto Lanconelli}\\
\end{center}

\begin{center}
\sc
Dipartimento di Matematica \\
Universita' degli Studi di Bari\\
 Via E. Orabona, 4\\
 70125 Bari - Italy\\
E-Mail: {\it alberto.lanconelli@uniba.it}
\end{center}

\vspace*{.3in}

\begin{abstract}
We consider a system of stochastic differential equations driven by
a standard $n$-dimensional Brownian motion where the drift
coefficient satisfies a Novikov-type condition while the diffusion
coefficient is the identity matrix. We define a vector $Z$ of square
integrable stochastic processes with the following property: if the
filtration of the translated Brownian motion obtained from the
Girsanov transform coincides with the one of the driving noise then
$Z$ coincides with the unique strong solution of the equation;
otherwise the process $Z$ solves in the strong sense a related
stochastic differential inequality. This fact together with an
additional assumption will provide a comparison result similar to
well known theorems obtained in the presence of strong solutions.
\end{abstract}

\bigskip
\noindent \textbf{Keywords:} stochastic differential equations,
Girsanov theorem, Novikov condition, convex envelope, comparison
theorems.

\bigskip
\noindent\textbf{Mathematics Subject Classification (2000):} 60H10.

\section{Introduction}
The present paper is devoted to the study of the system of
stochastic differential equations
\begin{eqnarray}\label{main SDE}
dX^x_t&=&b(X_t^x)+dB_t,\quad t\in ]0,T]\\
\nonumber X_0^x&=&x\in\mathbb{R}^n
\end{eqnarray}
where $b:\mathbb{R}^n\to\mathbb{R}^n$ is a measurable function,
$\{B_t\}_{0\leq t\leq T}$ is a standard $n$-dimensional Brownian
motion and $T\in\mathbb{R}$ is a fixed positive constant.\\
Equation (\ref{main SDE}) has attracted the attention of many
authors since it represents one of the simplest (the noise appears
additively) nontrivial stochastic perturbations of the ordinary
differential equation
\begin{eqnarray}\label{deterministic}
\frac{dX^x_t}{dt}&=&b(X_t^x),\quad t\in ]0,T]\\
\nonumber X_0^x&=&x\in\mathbb{R}^n.
\end{eqnarray}
One of the most fascinating features of equation (\ref{main SDE}) is
that it possesses a (strong) solution under very weak assumptions on
the drift function $b$ (assumptions that are not sufficient for the
existence of a solution to the deterministic equation
(\ref{deterministic})). It was in fact proved by Zvonkin \cite{Z}
that in the one dimensional case it is sufficient to require the
boundedness of $b$ in order to have a unique strong solution for the
equation (\ref{main SDE}). This result was then generalized to
several dimensions by Veretennikov \cite{V} who employed the
so-called Yamada-Watanabe principle (see e.g. \cite{KS}) and
techniques from the theory of partial differential equations.
Equation (\ref{main SDE}) was also considered by Krylov and
R\"ockner in \cite{KR}: in this paper they obtain strong solvability
of the
equation under an integrability condition of drift function $b$.\\
In \cite{MP} Meyer-Brandis and Proske proved that in the one
dimensional case, when the drift function is bounded, the unique
strong solution of (\ref{main SDE}) is even differentiable in the
sense of the Malliavin calculus (see e.g. \cite{Nualart}). Their
approach is based on a representation formula for solutions of
stochastic differential equations obtained in Lanconelli and Proske
\cite{LP}, white noise techniques and an approximation argument. We
also mention the recent paper by Nilssen \cite{Nilssen} where the
above mentioned Malliavin differentiability is obtained for
stochastic equations with a sub-linear drift.\\
We would like to mention the paper by Pilipenko \cite{P} who proved
the existence of a unique strong solution for the one dimensional
version of (\ref{main SDE}) with a bounded drift $b$ and with $B_t$
replaced by a stable symmetric Levy process. In \cite{DFPR} Da Prato
et al. obtained existence and uniqueness of a mild solution of an
infinite dimensional version of (\ref{main SDE}) with bounded drift.
Finally we remark that the above regularizing effect obtained with
the introduction of a noise into a deterministic differential
equation was also investigated for stochastic partial differential
equations in Flandoli et al. \cite{FGP} and Fedrizzi and Flandoli
\cite{FF}. Here the noise is added in a multiplicative way through
a Stratonovich integral.\\
In this paper we consider the stochastic differential equation
(\ref{main SDE}) with a drift function $b$ satisfying a Novikov-type
condition (see (\ref{novikov}) below). Under this assumption the
Girsanov theorem guarantees the existence of a weak solution for
that equation. This solution becomes the unique strong solution of
(\ref{main SDE}) if the shift utilized in the Girsanov theorem is
invertible (see the paper by \"Ust\"unel \cite{U} for an elegant
approach to stochastic equations via results on invertible shifts on
the Wiener space) or equivalently if the filtration of the
translated Brownian motion coincides with the filtration of the
original Brownian motion. We define a $n$-dimensional vector of
stochastic processes $Z$, characterized through a duality relation
(see (\ref{def of Z}) below) and we show that it coincides with the
unique strong solution of (\ref{main SDE}) when the above mentioned
filtrations agree. Observe that the process $Z$ is well defined
without the assumption of the invertibility of the shift or the
equality of the filtrations. We then ask ourselves the following
question: does the process $Z$ with the sole assumption
(\ref{novikov}) solve in the strong sense any equation related to
(\ref{main SDE})? Our main theorem states that the process $Z$
solves in the strong sense a stochastic differential inequality
where the drift function is the convex envelope of $b$, i.e. the
best convex minorant of $b$. This fact together with the local
Lipschitz-continuty of convex functions will provide a comparison
theorem for equation (\ref{main SDE}) where the role of the a priori
non-existing strong solution is played by the process $Z$.\\
(By comparison theorem we mean roughly speaking a result of the
following type: if the drift function of a stochastic differential
equation is greater than the drift function of another one then the
strong solution of the first equation is greater than the
strong solution of the second one).\\
The paper is organized as follows: In Section 2 after a quick
description of our framework we state the main result (Theorem
\ref{main theorem}) together with a corollary; Section 3 collects a
number of properties and useful remark about the process $Z$ that
will be utilized in Section 4 in proving the main results.

\section{Statement of the main results}

Let $(\Omega,\mathcal{F},\mathcal{P})$ be the classical Wiener space
associated with a standard $n$- dimensional Brownian motion over the
time interval $[0,T]$; more precisely, $\Omega$ coincides with the
space of continuous functions $\omega$ defined on the interval
$[0,T]$ with values on $\mathbb{R}^n$ and with
$\omega(0)=0\in\mathbb{R}^n$ ($T$ is a fixed positive number),
$\mathcal{F}$ is the Borel $\sigma$-algebra associated to the
topology induced by the sup-norm on $\Omega$ and $\mathcal{P}$
is the Wiener measure on $\Omega$.\\
We denote by $\{B_t\}_{0\leq t\leq T}$ the coordinate process
defined as $B_t(\cdot):\omega\in\Omega\mapsto
\omega(t)\in\mathbb{R}^n$, which turns out to be a standard
$n$-dimensional Brownian motion under the measure $\mathcal{P}$. We
also denote by $\{\mathcal{F}_t\}_{0\leq
t\leq T}$ its $\mathcal{P}$-augmented natural filtration.\\
Let $b:\mathbb{R}^n\to\mathbb{R}^n$ be a measurable function
satisfying the \emph{Novikov-type condition}
\begin{equation}\label{novikov}
E\Big[\exp\Big\{\int_0^T\Vert b(B_t+x)\Vert^2dt\Big\}\Big]<+\infty
\end{equation}
for all $x\in\mathbb{R}^n$. Here $E$ denotes the expectation on the
probability space $(\Omega,\mathcal{F},\mathcal{P})$ and
$\Vert\cdot\Vert$ stands for the $n$-dimensional Euclidian norm.
Observe that in the usual Novikov condition there is a factor
$\frac{1}{2}$ in front of the integral of formula (\ref{novikov}).
Therefore our condition is stronger than the classical one. The need
of this stronger condition will be evident in the proof of
Proposition \ref{adaptedness} below. Roughly speaking condition
(\ref{novikov}) guarantees that any process of the form
$\{b(B_t+x)+f(t)\}_{0\leq t\leq T}$, where $f$ is deterministic,
fulfils the usual Novikov condition.\\
According to the Girsanov theorem (see e.g. \cite{KS} page 190) the
process
\begin{eqnarray*}
B^b_t:=B_t-\int_0^tb(B_s)ds,\quad t\in [0,T],
\end{eqnarray*}
is a standard $n$-dimensional Brownian motion with respect to the
filtration $\{\mathcal{F}_t\}_{0\leq t\leq T}$ and the measure
\begin{eqnarray*}
d\mathcal{Q}:=\exp\Big\{\int_0^T\langle
b(B_s),dB_s\rangle-\frac{1}{2}\int_0^T\Vert
b(B_s)\Vert^2ds\Big\}d\mathcal{P}.
\end{eqnarray*}
(Here $\int_0^T\langle b(B_s),dB_s\rangle$ stands for
$\sum_{i=1}^n\int_0^Tb_i(B_s)dB_s^i$). We also denote by
$\{\mathcal{F}^b_t\}_{0\leq t\leq T}$ the
$\mathcal{P}$-augmented natural filtration of the process $B^b$.\\
We recall that condition (\ref{novikov}) guarantees that the process
\begin{eqnarray*}
t\in[0,T]\mapsto\exp\Big\{\int_0^t\langle
b(B_s),dB_s\rangle-\frac{1}{2}\int_0^t\Vert b(B_s)\Vert^2ds\Big\}
\end{eqnarray*}
is a continuous $(\{\mathcal{F}_t\}_{0\leq t\leq
T},\mathcal{P})$-martingale belonging to
$\mathcal{L}^q(\Omega,\mathcal{F},\mathcal{P})$ for all $q\in
[1,+\infty[$ (see \cite{KS} page 198).\\
In the sequel for an $n$-dimensional $\{\mathcal{F}_t\}_{0\leq t\leq
T}$-adapted process $\gamma=(\gamma^1,...,\gamma^n)$ satisfying
$\mathcal{P}(\sum_{i=1}^n\int_0^T|\gamma^i_s|^2ds<+\infty)=1$ the
following notation will be adopted
\begin{eqnarray*}
\mathcal{E}_t(\gamma):=\exp\Big\{\int_0^t\langle
\gamma_s,dB_s\rangle-\frac{1}{2}\int_0^t\Vert
\gamma_s\Vert^2ds\Big\}\quad t\in ]0,T[.
\end{eqnarray*}
We will simply write $\mathcal{E}(\gamma)$ when $t=T$ and $\mathcal{E}(b)$ when $\gamma_t=b(B_t)$ for any $t\in[0,T]$.\\
We now introduce the \emph{translation} (or \emph{shift})
\emph{operator}: for $k\in\mathbb{N}$, $\varphi\in
C_0^{\infty}(\mathbb{R}^{kn})$ and $t_1,...,t_k\in [0,T]$ define
\begin{eqnarray*}
\mathcal{T}_{-b}\big(\varphi(B_{t_1},...,B_{t_k})\big):=\varphi\Big(B_{t_1}-\int_0^{t_1}b(B_s)ds,...,B_{t_k}-\int_0^{t_k}b(B_s)ds\Big).
\end{eqnarray*}
This operator can be extended to
$\mathcal{L}^q(\Omega,\mathcal{F},\mathcal{P})$ for all $q\in
[1,+\infty]$. It is easy to see that under the condition
(\ref{novikov}), $\mathcal{T}_{-b}$ maps
$\mathcal{L}^q(\Omega,\mathcal{F},\mathcal{P})$
into $\cap_{r<q}\mathcal{L}^r(\Omega,\mathcal{F},\mathcal{P})$.
Note that in this notation we have $B_t^b=\mathcal{T}_{-b}B_t$.\\
If $h:\mathbb{R}^n\to\mathbb{R}$ is a measurable function we denote
by $\hat{h}$ the \emph{convex envelope} of $h$, i.e. $\hat{h}$ is
the greatest element of the set
\begin{eqnarray*}
\big\{g:\mathbb{R}^n\to\mathbb{R}\mbox{ convex and }g(x)\leq
h(x)\mbox{ for all }x\in\mathbb{R}^n\big\}.
\end{eqnarray*}
(Convex envelopes are well studied objects in optimization theory;
see the interesting paper by Oberman \cite{O} for a fully nonlinear
partial differential equation solved in the sense of
viscosity solutions by the convex envelope of a given function).\\
We conclude our framework saying that a function
$f:\mathbb{R}^n\to\mathbb{R}^n$ is \emph{quasi-monotonously
increasing} if for any $i\in\{1,...,n\}$ the inequality
\begin{eqnarray*}
f_i(x)\leq f_i(y)
\end{eqnarray*}
is satisfied for all $x,y\in\mathbb{R}^n$ such that $x_i=y_i$ and
$x_j\leq y_j$ if $j\neq i$. (This condition is well known in the
theory of systems of deterministic ordinary differential equations;
see for instance the discussion in Assing and Manthey \cite{AM}
and the references quoted there).\\
All the equalities and inequalities involving $n$-dimensional
vectors are understood in a componentwise manner.\\

\noindent We are now ready to state our main result.

\begin{theorem}\label{main theorem}\quad\\
Fix $x\in\mathbb{R}^n$ and let $b:\mathbb{R}^n\to\mathbb{R}^n$ be a
measurable function
fulfilling condition (\ref{novikov}).\\
For $i\in\{1,...,n\}$ and $t\in [0,T]$ define $Z_t^{x,i}$ to be the
unique element of $\mathcal{L}^2(\Omega,\mathcal{F},\mathcal{P})$
verifying for any
$Y\in\mathcal{L}^2(\Omega,\mathcal{F},\mathcal{P})$ the following
identity
\begin{eqnarray}\label{def of Z}
E[Z_t^{x,i}Y]&=&E[(B_t^i+x_i)\mathcal{E}(b)\mathcal{T}_{-b}Y].
\end{eqnarray}
Then:\\
i) The process $Z^x_t:=(Z_t^{x,1},...,Z_t^{x,n})$ is continuous and
$\{\mathcal{F}_t\}_{0\leq t\leq T}$-adapted.\\
ii) If $\mathcal{F}^b_t=\mathcal{F}_t$ for all $t\in [0,T]$, the
process $\{Z_t^x\}_{0\leq t\leq T}$ is the unique strong solution of
the system of stochastic differential equations
\begin{eqnarray}\label{SDE}
dX^x_t&=&b(X^x_t)dt+dB_t\quad t\in ]0,T]\\
\nonumber X^x_0&=&x
\end{eqnarray}
iii) If $\mathcal{F}^b_t\subset\mathcal{F}_t$ for some $t\in [0,T]$,
the process $\{Z_t^x\}_{0\leq t\leq T}$ solves the following system
of stochastic differential inequalities
\begin{eqnarray}\label{SDI}
dZ^x_t&\geq&\hat{b}(Z^x_t)dt+dB_t\quad t\in ]0,T]\\
\nonumber Z^x_0&=&x.
\end{eqnarray}
where $\hat{b}:=(\hat{b}_1,...,\hat{b}_n)$ and $\hat{b}_i$ is the
convex envelope of $b_i$ for each $i\in\{1,...,n\}$.\\
If in
addition $\hat{b}$ is quasi-monotonously increasing one has with
probability one that
\begin{eqnarray}\label{comparison}
Z_t^x\geq Y_t^x\mbox{ for all }t\in [0,\tau]
\end{eqnarray}
where $\{Y_t^x\}_{0\leq t\leq\tau}$ is the unique (possibly local)
strong solution of the stochastic differential equation
\begin{eqnarray}\label{maximal solution}
dY^x_t&=&\hat{b}(Y^x_t)dt+dB_t\quad t\in ]0,T]\\
\nonumber Y^x_0&=&x
\end{eqnarray}
and $\tau$ its explosion time.
\end{theorem}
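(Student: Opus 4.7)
My plan is to prove the three clauses of Theorem~\ref{main theorem} in order, using the duality relation (\ref{def of Z}) and the analytical properties of $Z$ established in Section~3. For clause (i), existence of $Z_t^{x,i}$ in $\mathcal{L}^2$ is a Riesz representation argument: the map $Y \mapsto E[(B_t^i + x_i)\mathcal{E}(b)\mathcal{T}_{-b}Y]$ is bounded on $\mathcal{L}^2$ by Cauchy--Schwarz, using that under (\ref{novikov}) one has $\mathcal{E}(b) \in \mathcal{L}^q$ for every $q$ and $\mathcal{T}_{-b}$ sends $\mathcal{L}^q$ into $\mathcal{L}^r$ for all $r < q$. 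Adaptedness is the content of a preparatory proposition of Section~3: one verifies that the right--hand side of (\ref{def of Z}) depends on $Y$ only through $E[Y|\mathcal{F}_t]$, by splitting $\mathcal{E}(b)\mathcal{T}_{-b}$ into its $\mathcal{F}_t$--part and its tail part, and it is precisely the control of the latter that forces the strengthening of the usual Novikov condition. Continuity then follows from an $\mathcal{L}^2$ estimate on $Z_t^x - Z_s^x$ combined with Kolmogorov's criterion.

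For clause (ii), the assumption $\mathcal{F}_t^b = \mathcal{F}_t$ makes the Girsanov shift invertible, so that the weak solution supplied by Girsanov is in fact the unique strong solution $X^x$ of (\ref{SDE}). I would then verify directly that $X^x$ satisfies (\ref{def of Z}) by rewriting its right--hand side as $E_{\mathcal{Q}}[(B_t^i + x_i)\mathcal{T}_{-b}Y]$ with $d\mathcal{Q} = \mathcal{E}(b)d\mathcal{P}$, using that $B^b$ is a $\mathcal{Q}$--Brownian motion and that the filtration equality allows $B$ to be written as a measurable functional of $B^b$ under $\mathcal{Q}$; uniqueness of the element satisfying (\ref{def of Z}) then forces $Z^x = X^x$.

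The substantive step is clause (iii). No strong solution is available a priori, but the strategy is to combine (\ref{def of Z}) with the auxiliary results of Section~3 to obtain a representation of $Z_t^{x,i} - B_t^i - x_i$ as an $\mathcal{F}_t$--conditional expectation of a path integral of $b_i$ along the underlying weak--solution trajectory, and then apply conditional Jensen's inequality. Since $\hat b_i$ is convex with $\hat b_i \leq b_i$, Jensen produces
\[
Z_t^{x,i} - B_t^i - x_i \geq \int_0^t \hat b_i(Z_s^x)\,ds,
\]
which is exactly (\ref{SDI}). This representation step is the main obstacle of the proof: once the conditional expectation form is in hand, the convex envelope appears for free from Jensen.

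For the comparison (\ref{comparison}), set $\Delta_t := Z_t^x - Y_t^x$, so that componentwise $\Delta_t^i \geq \int_0^t [\hat b_i(Z_s^x) - \hat b_i(Y_s^x)]\,ds$. Convexity of each $\hat b_i$ gives local Lipschitz continuity on bounded sets, so up to the stopping time $\sigma_R := \inf\{t : \|Z_t^x\| + \|Y_t^x\| \geq R\}$ one has a uniform Lipschitz constant $L_R$. Quasi-monotonicity of $\hat b$ enters through the classical componentwise argument: if $\Delta^i$ were about to turn negative while the other coordinates $\Delta^j$ are still nonnegative, quasi-monotonicity would force $\hat b_i(Z_s^x) \geq \hat b_i(Y_s^x)$, contradicting a strictly negative instantaneous drift on $\Delta^i$. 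A Gronwall estimate on a smooth approximation of $\|\Delta_-\|^2$ together with letting $R \to \infty$ up to the explosion time $\tau$ then yields $Z_t^x \geq Y_t^x$ almost surely on $[0,\tau]$.
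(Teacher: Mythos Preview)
Your overall plan tracks the paper's proof closely, with two places where the mechanism differs enough to note.

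For (i), an $\mathcal{L}^2$ estimate is not enough for Kolmogorov's criterion: $E[|Z_t^i-Z_s^i|^2]$ is only of order $|t-s|$, missing the required exponent $1+\varepsilon$. The paper uses a third-moment bound, obtained from the operator-Jensen inequality (\ref{jensen}):
\[
E[|Z(B_t^i)-Z(B_s^i)|^3]\le E[Z(|B_t^i-B_s^i|^3)]=E[|B_t^i-B_s^i|^3\mathcal{E}(b)]\le C|t-s|^{3/2}.
\]

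For (ii) your route genuinely differs. You first invoke invertibility of the shift to manufacture the strong solution $X^x$ and then check that $X^x$ satisfies (\ref{def of Z}). The paper instead works directly with $Z$: testing (\ref{def of Z}) against stochastic exponentials $\mathcal{E}(f)$ and applying It\^o's formula yields the exact identity
\[
Z_t^i=\int_0^t Z(b_i(B_s))\,ds+B_t^i,
\]
and under the filtration equality the commutation $Z(b_i(B_s))=b_i(Z(B_s))$ from Remark~\ref{commutation} turns this into the SDE. Your approach is valid and in fact coincides with the paper's uniqueness argument; the paper's direct computation has the advantage that it survives verbatim into (iii).

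For (iii), your description as ``an $\mathcal{F}_t$--conditional expectation'' plus ``conditional Jensen'' is morally right but not the literal mechanism. The paper reuses the identity displayed above (which holds without any filtration hypothesis), then applies two properties of the operator $Z$: order preservation gives $Z(b_i(B_s))\ge Z(\hat b_i(B_s))$, and the abstract Jensen inequality (\ref{jensen})---proved from $Z(1)=1$ and positivity, not from a conditional-expectation identity---gives $Z(\hat b_i(B_s))\ge \hat b_i(Z(B_s))=\hat b_i(Z_s)$. So the ``main obstacle'' you flag is resolved by the same duality/It\^o computation as in (ii), and the convex-envelope step is the operator inequality (\ref{jensen}) rather than ordinary conditional Jensen. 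For the comparison (\ref{comparison}) the paper simply invokes Proposition~3.3 of Assing--Manthey~\cite{AM}; your Gronwall/quasi-monotonicity sketch is essentially the proof of that proposition.
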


\begin{corollary}\label{corollary}\quad\\
Fix $x\in\mathbb{R}^n$ and let $b:\mathbb{R}^n\to\mathbb{R}^n$ be a
measurable function fulfilling condition (\ref{novikov}). Assume in
addition that
\begin{eqnarray*}
b(x)\geq Ax+b\mbox{ for all }x\in\mathbb{R}^n
\end{eqnarray*}
where $A=(a_{ij})_{1\leq i,j\leq n}$ is a $n\times n$-matrix with
$a_{ij}\geq 0$ if $i\neq j$ and $b\in\mathbb{R}^n$.\\
Then the process $\{Z_t^x\}_{0\leq t\leq T}$ defined in (\ref{def of
Z}) verifies
\begin{eqnarray}\label{linear}
Z_t^x\geq
\Phi(t)\Big(x+\int_0^t\Phi^{-1}(s)bds+\int_0^t\Phi^{-1}(s)dB_s\Big)\mbox{
for all }t\in [0,T]
\end{eqnarray}
with probability one, where $t\mapsto\Phi(t)$ is the unique solution
of the following matrix differential equation
\begin{eqnarray*}
\frac{d}{dt}\Phi(t)=A\Phi(t),\quad \Phi(0)=I.
\end{eqnarray*}
($I$ denotes the $n\times n$ identity matrix).
\end{corollary}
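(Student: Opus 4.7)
The plan is to combine Theorem \ref{main theorem} iii) with a pathwise Gronwall-type comparison against the linear SDE with drift $x\mapsto Ax+b$. Each component $x\mapsto(Ax+b)_i$ is affine (hence convex) and, by hypothesis, pointwise $\leq b_i$; it therefore lies in the family defining $\hat{b}_i$, so $\hat{b}(x)\geq Ax+b$ componentwise for every $x\in\mathbb{R}^n$. Reading Theorem \ref{main theorem} iii) in its increment form then gives, componentwise,
$$Z_t^x-Z_s^x\geq\int_s^t\hat{b}(Z_r^x)\,dr+(B_t-B_s)\geq\int_s^t(AZ_r^x+b)\,dr+(B_t-B_s),\qquad 0\leq s\leq t\leq T.$$
In particular, the process $N_t:=Z_t^x-x-\int_0^t(AZ_s^x+b)\,ds-B_t$ is continuous, vanishes at $t=0$, and is componentwise nondecreasing in $t$.

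Next, let $Y_t^x$ denote the right-hand side of (\ref{linear}); by the standard variation-of-constants argument, this is the unique global strong solution of the linear SDE $dY_t=(AY_t+b)\,dt+dB_t$, $Y_0=x$. Setting $W_t:=Z_t^x-Y_t^x$ and subtracting the two equations yields the pathwise linear Volterra integral equation
$$W_t=\int_0^t AW_s\,ds+N_t,\qquad W_0=0.$$
Since $N$ has bounded variation, the vector $\Phi(t)\int_0^t\Phi(s)^{-1}\,dN_s$ is well defined in the Stieltjes sense, and a direct differentiation (using $\Phi'=A\Phi$) shows that it satisfies the same Volterra equation; Gronwall's lemma then yields the Duhamel representation
$$W_t=\int_0^t\Phi(t)\Phi(s)^{-1}\,dN_s.$$

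Finally, I would exploit the off-diagonal nonnegativity hypothesis on $A$: choosing $c\geq 0$ with $c+a_{ii}\geq 0$ for every $i$, the matrix $A+cI$ has only nonnegative entries, so
$$\Phi(t)\Phi(s)^{-1}=e^{A(t-s)}=e^{-c(t-s)}\sum_{k\geq 0}\frac{(A+cI)^k(t-s)^k}{k!}$$
has nonnegative entries for every $0\leq s\leq t\leq T$. Since $dN$ is a componentwise nonnegative vector-valued measure, the Duhamel integral is componentwise nonnegative, i.e.\ $Z_t^x\geq Y_t^x$ for all $t\in[0,T]$, which is (\ref{linear}).

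The main technical obstacle is the pathwise Duhamel step, since $N$ is only known to be continuous and nondecreasing rather than absolutely continuous; once that is justified (by Stieltjes integration by parts, or by the verification sketched above), the remaining ingredients---convex-envelope domination, existence for the linear SDE, and entrywise nonnegativity of the Metzler semigroup $e^{A\tau}$---are standard.
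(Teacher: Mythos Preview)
Your argument is correct. The paper's proof takes a slightly different, more black-box route: after deriving the differential inequality $dZ_t\geq(AZ_t+b)\,dt+dB_t$ (exactly as you do, by noting that the affine minorant $Ax+b$ lies below $\hat b$), it observes that the off-diagonal nonnegativity of $A$ is precisely the statement that $x\mapsto Ax+b$ is quasi-monotonously increasing, and then invokes the general comparison result for stochastic differential inequalities (Proposition~3.3 in Assing--Manthey) to conclude $Z_t\geq Y_t$. Your approach instead exploits the \emph{linearity} of the comparison drift to give a self-contained pathwise argument: you write the difference $W=Z-Y$ as the solution of a linear Volterra equation driven by the nondecreasing process $N$, solve it explicitly via Duhamel's formula, and then read off nonnegativity from the entrywise positivity of the Metzler semigroup $e^{A\tau}$. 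The paper's route is shorter and would apply verbatim to any locally Lipschitz quasi-monotone minorant, not just an affine one; your route is more elementary and avoids the external reference, but is tailored to the linear case. Note incidentally that the proof of Theorem~\ref{main theorem}~iii) actually establishes the equality $Z_t^i=\int_0^tZ(b_i(B_s))\,ds+B_t^i$, so your $N$ is in fact absolutely continuous and the Stieltjes-integration caveat you flag is not a genuine obstacle here.
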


\section{Preliminary results}

In this section we will only assume that
$b:\mathbb{R}^n\to\mathbb{R}^n$ is a measurable function satisfying
condition (\ref{novikov}).\\
Let $q>1$,
$X\in\mathcal{L}^{q+}(\Omega,\mathcal{F},\mathcal{P}):=\cup_{r>q}\mathcal{L}^{r}(\Omega,\mathcal{F},\mathcal{P})$
and consider the linear operator
\begin{eqnarray*}
Y\mapsto E[X\mathcal{E}(b)\mathcal{T}_{-b}Y].
\end{eqnarray*}
This map is continuous on
$\mathcal{L}^{q'}(\Omega,\mathcal{F},\mathcal{P})$, where $q'$
denotes the conjugate exponent of $q$. In fact,
\begin{eqnarray}\label{first estimate}
|E[X\mathcal{E}(b)\mathcal{T}_{-b}Y]|&\leq&(E[|X|^q\mathcal{E}(b)])^{\frac{1}{q}}(E[|\mathcal{T}_{-b}Y|^{q'}\mathcal{E}(b)])^{\frac{1}{q'}}\\
\nonumber&\leq&
(\Vert |X|^q\Vert_p\Vert\mathcal{E}(b)\Vert_{p'})^{\frac{1}{q}}\Vert Y\Vert_{q'}\\
\nonumber&\leq&\Vert X\Vert_{qp}\Vert\mathcal{E}(b)\Vert^{\frac{1}{q}}_{p'}\Vert Y\Vert_{q'}\\
\nonumber&=&C\Vert X\Vert_{qp}\Vert Y\Vert_{q'}
\end{eqnarray}
where we used the H\"{o}lder inequality twice and the Girsanov
theorem ($p$ and $p'$ are conjugate exponents with $p>1$). By the
Riesz representation theorem there exists a unique element $Z(X)$
(we stress only the dependence on $X$ and not on $b$ since this
function is arbitrary but fixed) belonging to
$\mathcal{L}^{q}(\Omega,\mathcal{F},\mathcal{P})$ such that
\begin{eqnarray}\label{riesz}
E[X\mathcal{E}(b)\mathcal{T}_{-b}Y]=E[Z(X)Y]
\end{eqnarray}
This equality will be crucial for the rest of the paper. The next
proposition explains how $Z(X)$ depends on $X$.
\begin{proposition}\quad\\
The operator
\begin{eqnarray*}
Z:\mathcal{L}^{q+}(\Omega,\mathcal{F},\mathcal{P})&\to&\mathcal{L}^{q}(\Omega,\mathcal{F},\mathcal{P})\\
X&\mapsto& Z(X)
\end{eqnarray*}
is linear and continuous.
\end{proposition}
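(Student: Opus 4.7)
The plan is to extract both linearity and continuity from the Riesz characterization together with the estimate (\ref{first estimate}) already derived in the excerpt.

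For linearity, I would fix $X_1,X_2\in\mathcal{L}^{q+}(\Omega,\mathcal{F},\mathcal{P})$ and scalars $\alpha,\beta$, and observe that the defining identity (\ref{riesz}) is linear in its first argument: for every test $Y\in\mathcal{L}^{q'}(\Omega,\mathcal{F},\mathcal{P})$ we have
\begin{eqnarray*}
E[(\alpha X_1+\beta X_2)\mathcal{E}(b)\mathcal{T}_{-b}Y]
&=& \alpha E[X_1\mathcal{E}(b)\mathcal{T}_{-b}Y]+\beta E[X_2\mathcal{E}(b)\mathcal{T}_{-b}Y]\\
&=& E[(\alpha Z(X_1)+\beta Z(X_2))Y].
\end{eqnarray*}
Since the Riesz representative is unique, this forces $Z(\alpha X_1+\beta X_2)=\alpha Z(X_1)+\beta Z(X_2)$.

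For continuity I would interpret the statement as saying that, for every $r>q$, the restriction of $Z$ to $\mathcal{L}^{r}(\Omega,\mathcal{F},\mathcal{P})$ is a bounded linear map into $\mathcal{L}^{q}(\Omega,\mathcal{F},\mathcal{P})$ (this is the natural meaning since $\mathcal{L}^{q+}$ is only a union, not a normed space). Given such an $r$, choose $p:=r/q>1$ and let $p'$ be its conjugate; the Novikov-type condition (\ref{novikov}) guarantees $\mathcal{E}(b)\in\mathcal{L}^{p'}$, so the constant $C=\Vert\mathcal{E}(b)\Vert_{p'}^{1/q}$ in (\ref{first estimate}) is finite. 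By the duality characterization of the $\mathcal{L}^q$-norm and by (\ref{riesz}),
\begin{eqnarray*}
\Vert Z(X)\Vert_q
&=& \sup_{\Vert Y\Vert_{q'}\leq 1}|E[Z(X)Y]|
= \sup_{\Vert Y\Vert_{q'}\leq 1}|E[X\mathcal{E}(b)\mathcal{T}_{-b}Y]|
\leq C\Vert X\Vert_{r},
\end{eqnarray*}
which is exactly the bound needed for continuity of $Z:\mathcal{L}^r\to\mathcal{L}^q$.

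The only real obstacle is making sure the chain of H\"older inequalities in (\ref{first estimate}) is fully justified in this direction: one must check that the intermediate quantities $E[|X|^q\mathcal{E}(b)]$ and $E[|\mathcal{T}_{-b}Y|^{q'}\mathcal{E}(b)]$ are finite for $X\in\mathcal{L}^r$ and $Y\in\mathcal{L}^{q'}$. The second is handled by the Girsanov theorem, which turns the expectation under $d\mathcal{Q}=\mathcal{E}(b)d\mathcal{P}$ of $|\mathcal{T}_{-b}Y|^{q'}$ into $E_\mathcal{Q}[|Y\circ\mathcal{T}_{-b}|^{q'}]=E[|Y|^{q'}]$ (since $B^b$ has the same law under $\mathcal{Q}$ as $B$ under $\mathcal{P}$); the first is bounded via H\"older with exponents $(p,p')$, using $\Vert\mathcal{E}(b)\Vert_{p'}<\infty$ from (\ref{novikov}). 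Once these are in place, the supremum argument above yields the operator bound and completes the proof.
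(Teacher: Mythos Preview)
Your argument is correct and follows essentially the same route as the paper: linearity from the uniqueness of the Riesz representative in (\ref{riesz}), and continuity from the duality formula $\Vert Z(X)\Vert_q=\sup_{\Vert Y\Vert_{q'}\leq 1}|E[Z(X)Y]|$ combined with the estimate (\ref{first estimate}). Your version is in fact slightly more precise, since you spell out that continuity must be understood as boundedness of $Z:\mathcal{L}^r\to\mathcal{L}^q$ for each fixed $r>q$ (the paper writes the informal $\Vert X\Vert_{q+}$), and you also make explicit why the intermediate quantities in (\ref{first estimate}) are finite via Girsanov and H\"older.
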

\begin{proof}
The linearity follows immediately from (\ref{riesz}). Let us prove
the continuity:
\begin{eqnarray*}
\Vert Z(X)\Vert_q&=&\sup_{\Vert Y\Vert_{q'}\leq 1}|E[Z(X)Y]|\\
&=&\sup_{\Vert Y\Vert_{q'}\leq 1}|E[X\mathcal{E}(b)\mathcal{T}_{-b}Y]|\\
&\leq&\sup_{\Vert Y\Vert_{q'}\leq 1} C\Vert X\Vert_{q+}\Vert
Y\Vert_{q'}\\
&=&C\Vert X\Vert_{q+}
\end{eqnarray*}
where in the inequality we used the estimate obtained in (\ref{first
estimate}).
\end{proof}

\begin{proposition}\quad\\
Let $X\in\mathcal{L}^{q+}(\Omega,\mathcal{F},\mathcal{P})$. Then:\\
i) If $\varphi:\mathbb{R}\to\mathbb{R}$ is a convex function such
that
$\varphi(X)\in\mathcal{L}^{q+}(\Omega,\mathcal{F},\mathcal{P})$, we
have
\begin{eqnarray}\label{jensen}
\varphi(Z(X))\leq Z(\varphi(X)).
\end{eqnarray}
ii) If $U\in\mathcal{L}^{\infty}(\Omega,\mathcal{F},\mathcal{P})$,
the following identity holds true:
\begin{eqnarray}\label{left inverse}
UZ(X)=Z(X\mathcal{T}_{-b}U).
\end{eqnarray}
In particular for $X=1$ one gets
\begin{eqnarray}\label{left inverse 1}
U=Z(\mathcal{T}_{-b}U)\quad\mbox{ ($Z$ is the left-inverse of
$\mathcal{T}_{-b}$)}.
\end{eqnarray}
iii) If $E_{\mathcal{Q}}[\cdot|\mathcal{F}_T^b]$ denotes the
conditional expectation given $\mathcal{F}_T^b$ under the measure
$Q$, we have
\begin{eqnarray}\label{projection}
Z(X)=Z(E_{\mathcal{Q}}[X|\mathcal{F}_T^b]).
\end{eqnarray}
\end{proposition}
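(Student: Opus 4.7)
The plan is to exploit the defining identity in its Girsanov form
\begin{eqnarray*}
E[Z(X)Y]=E[X\mathcal{E}(b)\mathcal{T}_{-b}Y]=E_\mathcal{Q}[X\mathcal{T}_{-b}Y],
\end{eqnarray*}
combined with three elementary features of the shift: $\mathcal{T}_{-b}$ is multiplicative and, being a substitution operator, commutes with composition by any Borel function; $\mathcal{T}_{-b}Y$ is $\mathcal{F}^b_T$-measurable since $\mathcal{T}_{-b}B_t=B_t^b$; and $\mathcal{T}_{-b}$ preserves $\mathcal{P}$-a.s.\ inequalities in \emph{both} directions, by the equivalence $\mathcal{P}\sim\mathcal{Q}$ and the fact that $B^b$ has the same $\mathcal{Q}$-law as $B$ has $\mathcal{P}$-law.

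Part (iii) falls out by applying the tower property under $\mathcal{Q}$ and reversing the Girsanov step:
\begin{eqnarray*}
E[Z(X)Y]=E_\mathcal{Q}[X\mathcal{T}_{-b}Y]=E_\mathcal{Q}\big[E_\mathcal{Q}[X|\mathcal{F}^b_T]\mathcal{T}_{-b}Y\big]=E\big[Z(E_\mathcal{Q}[X|\mathcal{F}^b_T])Y\big],
\end{eqnarray*}
so that uniqueness of the Riesz representative gives (\ref{projection}). For (ii) I would test the right-hand side of (\ref{left inverse}): since $U\in\mathcal{L}^\infty$ implies $UY\in\mathcal{L}^{q'}$, multiplicativity of the shift yields
\begin{eqnarray*}
E[Z(X\mathcal{T}_{-b}U)Y]=E[X\mathcal{T}_{-b}U\cdot\mathcal{E}(b)\cdot\mathcal{T}_{-b}Y]=E[X\mathcal{E}(b)\mathcal{T}_{-b}(UY)]=E[UZ(X)Y],
\end{eqnarray*}
which is (\ref{left inverse}). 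Specializing to $X=1$ and using $Z(1)=1$ — which follows from $E[Z(1)Y]=E_\mathcal{Q}[\mathcal{T}_{-b}Y]=E[Y]$, since $B^b$ is a $\mathcal{Q}$-Brownian motion — then yields (\ref{left inverse 1}).

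Part (i) is the main obstacle. I would first read off from (iii) and (\ref{left inverse 1}) the concrete representation
\begin{eqnarray*}
\mathcal{T}_{-b}Z(X)=E_\mathcal{Q}[X|\mathcal{F}^b_T]\quad\mathcal{P}\text{-a.s.},
\end{eqnarray*}
by writing the $\mathcal{F}^b_T$-measurable variable $E_\mathcal{Q}[X|\mathcal{F}^b_T]$ as $\mathcal{T}_{-b}V$ and identifying $V=Z(X)$ through (iii) and (\ref{left inverse 1}). Conditional Jensen under $\mathcal{Q}$ and the substitution property $\varphi\circ\mathcal{T}_{-b}=\mathcal{T}_{-b}\circ\varphi$ then give
\begin{eqnarray*}
\mathcal{T}_{-b}\varphi(Z(X))=\varphi(E_\mathcal{Q}[X|\mathcal{F}^b_T])\leq E_\mathcal{Q}[\varphi(X)|\mathcal{F}^b_T]=\mathcal{T}_{-b}Z(\varphi(X)),
\end{eqnarray*}
and stripping off $\mathcal{T}_{-b}$ by the two-sided monotonicity mentioned above yields (\ref{jensen}). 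The delicate point is that (\ref{left inverse 1}) is formulated for bounded $U$, so to invoke it with $Z(X)$ or with $\varphi(Z(X))$ one must truncate by $\mathbf{1}_{\{|Z(X)|\leq k\}}$, apply the identity, and pass to the limit using the $\mathcal{L}^q$-continuity of $Z$ from the previous proposition together with the hypothesis $\varphi(X)\in\mathcal{L}^{q+}$ to dominate the limit.
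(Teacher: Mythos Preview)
Your arguments for parts (ii) and (iii) are essentially identical to the paper's, just written in a slightly different order. Your verification of $Z(1)=1$ is also the same.

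For part (i), however, you take a genuinely different route. The paper argues in two lines: $Z$ is linear, satisfies $Z(1)=1$, and preserves positivity (since $E[Z(X)Y]=E[X\mathcal{E}(b)\mathcal{T}_{-b}Y]\geq 0$ whenever $X,Y\geq 0$). Any such operator automatically satisfies the Jensen-type inequality, because a convex function is a supremum of affine functions $\varphi(x)=\sup_\alpha(a_\alpha x+c_\alpha)$, so $\varphi(Z(X))=\sup_\alpha Z(a_\alpha X+c_\alpha)\leq Z(\varphi(X))$, using monotonicity for the inequality. This is the classical ``abstract Jensen'' argument and needs no truncation, no stripping of $\mathcal{T}_{-b}$, and no integrability bookkeeping beyond what is already assumed.

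Your route---establishing the representation $\mathcal{T}_{-b}Z(X)=E_{\mathcal{Q}}[X|\mathcal{F}^b_T]$ and then applying conditional Jensen under $\mathcal{Q}$---is correct, and your justification for inverting $\mathcal{T}_{-b}$ on almost-sure inequalities via $\mathcal{P}\sim\mathcal{Q}$ is sound. The cost is the truncation argument you flag at the end: extending (\ref{left inverse 1}) from bounded $U$ to $U=Z(X)$ and $U=Z(\varphi(X))$ requires controlling $\mathcal{T}_{-b}V_k\to\mathcal{T}_{-b}V$ in some $\mathcal{L}^{q+}$ and invoking the continuity of $Z$, which is doable but adds a layer of analysis the paper avoids entirely. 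What your approach buys is the explicit identification $\mathcal{T}_{-b}Z(X)=E_{\mathcal{Q}}[X|\mathcal{F}^b_T]$ as a byproduct; the paper obtains an equivalent description only later, and via Wiener chaos rather than directly.
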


\begin{proof}
We prove each part of the statement separately. \\
i) In order to prove (\ref{jensen}) it is sufficient to show that
$Z(1)=1$ and that $Z$ preserves the positivity. These facts together
with the representation of a convex as a supremum of affine
functions will provide the desired inequality. Now using
(\ref{riesz}) and the Girsanov theorem we get
\begin{eqnarray*}
E[Z(1)Y]&=&E[\mathcal{E}(b)\mathcal{T}_{-b}Y]\\
&=&E[Y].
\end{eqnarray*}
The previous identities hold for any $Y\in
\mathcal{L}^{q'}(\Omega,\mathcal{F},\mathcal{P})$ implying that
$Z(1)=1$. Now if $X\geq 0$ then for any
$Y\in\mathcal{L}^{q'}(\Omega,\mathcal{F},\mathcal{P})$ with $Y\geq
0$ we obtain
\begin{eqnarray*}
E[Z(X)Y]=E[X\mathcal{E}(b)\mathcal{T}_{-b}Y]\geq 0.
\end{eqnarray*}
Hence $Z(X)\geq 0$.\\
ii) The proof of (\ref{left inverse}) is obtained as follows
\begin{eqnarray*}
E[UZ(X)Y]&=&E[X\mathcal{E}(b)\mathcal{T}_{-b}(UY)]\\
&=&E[X\mathcal{E}(b)\mathcal{T}_{-b}U\mathcal{T}_{-b}Y]\\
&=&E[Z(X\mathcal{T}_{-b}U)Y].
\end{eqnarray*}
Since $Y$ is arbitrary in
$\mathcal{L}^{q'}(\Omega,\mathcal{F},\mathcal{P})$ the proof is
complete.\\
iii) Let $Y\in\mathcal{L}^{q'}(\Omega,\mathcal{F},\mathcal{P})$.
Then
\begin{eqnarray*}
E[Z(X)Y]&=&E[X\mathcal{E}(b)\mathcal{T}_{-b}Y]\\
&=&E_{\mathcal{Q}}[X\mathcal{T}_{-b}Y]\\
&=&E_{\mathcal{Q}}[E_{\mathcal{Q}}[X|\mathcal{F}_T^b]\mathcal{T}_{-b}Y]\\
&=&E[E_{\mathcal{Q}}[X|\mathcal{F}_T^b]\mathcal{E}(b)\mathcal{T}_{-b}Y]\\
&=&E[Z(E_{\mathcal{Q}}[X|\mathcal{F}_T^b])Y].
\end{eqnarray*}
In the third equality we used the fact that $\mathcal{T}_{-b}Y$ is
$\mathcal{F}_T^b$-measurable since $\mathcal{T}_{-b}$ transforms
functionals of $B$ into functionals of $B^b$.
\end{proof}

\begin{remark}\label{commutation}\quad\\
Equality (\ref{left inverse 1}) implies a non trivial property of
the operator $Z$:\\
If $\varphi:\mathbb{R}\to\mathbb{R}$ is a bounded function,
\begin{eqnarray*}
\varphi(Z(\mathcal{T}_{-b}U))&=&\varphi(U)\\
&=&Z(\mathcal{T}_{-b}\varphi(U))\\
&=&Z(\varphi(\mathcal{T}_{-b}U))
\end{eqnarray*}
i.e.
\begin{eqnarray*}
\varphi(Z(\mathcal{T}_{-b}U))=Z(\varphi(\mathcal{T}_{-b}U)).
\end{eqnarray*}
This means that on the set of bounded functionals of the Brownian
motion $B^b$ ($\mathcal{T}_{-b}U$ is an element of this kind) the
operator $Z$ commutes with any bounded nonlinear function. Outside
this set we
can only get (\ref{jensen}).\\
Note that when $\mathcal{F}=\mathcal{F}^b$ the set of bounded
functionals of the Brownian motion $B^b$ coincides with the whole
$\mathcal{L}^{\infty}(\Omega,\mathcal{F},\mathcal{P})$. In this case
equation (\ref{SDE}) possess a unique strong solution (see Section 4
below).
\end{remark}

The next proposition establishes that the operator $Z$ preserves
also the $\{\mathcal{F}_t\}_{0\leq t\leq T}$-adaptedness.

\begin{proposition}\label{adaptedness}\quad\\
Let $\{X_t\}_{0\leq t\leq T}$ be an $\{\mathcal{F}_t\}_{0\leq t\leq
T}$-adapted stochastic process such that
$X_t\in\mathcal{L}^{2+}(\Omega,\mathcal{F},\mathcal{P})$ for all
$t\in [0,T]$. Then the process $\{Z(X_t)\}_{0\leq t\leq T}$ is also
$\{\mathcal{F}_t\}_{0\leq t\leq T}$-adapted.
\end{proposition}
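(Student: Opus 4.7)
The plan is to prove that $Z(X_t)$ coincides with its $\mathcal{F}_t$-conditional expectation. By the duality (\ref{riesz}) this reduces to the identity
$$E[X_t\,\mathcal{E}(b)\,\mathcal{T}_{-b}Y] \;=\; E[X_t\,\mathcal{E}(b)\,\mathcal{T}_{-b}\,E[Y\mid\mathcal{F}_t]] \qquad \text{for every } Y \in \mathcal{L}^{q'}.$$
After rewriting both sides as $\mathcal{Q}$-expectations via Girsanov, the central step is to establish the commutation relation
$$E_{\mathcal{Q}}\bigl[\mathcal{T}_{-b}Y \,\big|\, \mathcal{F}_t\bigr] \;=\; \mathcal{T}_{-b}\bigl(E[Y \mid \mathcal{F}_t]\bigr), \qquad (\star)$$
for then the $\mathcal{F}_t$-measurability of $X_t$ and the tower property under $\mathcal{Q}$ yield
$$E[Z(X_t)\,Y] = E_{\mathcal{Q}}\bigl[X_t\,E_{\mathcal{Q}}[\mathcal{T}_{-b}Y\mid\mathcal{F}_t]\bigr] = E_{\mathcal{Q}}\bigl[X_t\,\mathcal{T}_{-b}\,E[Y\mid\mathcal{F}_t]\bigr] = E\bigl[Z(X_t)\,E[Y\mid\mathcal{F}_t]\bigr].$$

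The key input to $(\star)$ is the Girsanov theorem itself: under $\mathcal{Q}$, the translated process $B^b$ is a standard Brownian motion with respect to the \emph{original} filtration $\{\mathcal{F}_t\}_{0\le t\le T}$, so for $u > t$ the increment $B^b_u - B^b_t$ is $\mathcal{Q}$-independent of $\mathcal{F}_t$ and distributed as a Brownian increment. I would first verify $(\star)$ on cylindrical $Y = \varphi(B_{t_1},\ldots,B_{t_k})$ with $\varphi$ smooth and bounded. Ordering the times so that $t_1,\ldots,t_j \le t < t_{j+1},\ldots,t_k$ and writing each later $B_{t_i}$ as $B_t + (B_{t_i} - B_t)$, the $\mathcal{P}$-independence of the future increments from $\mathcal{F}_t$ gives $E[Y\mid\mathcal{F}_t] = \widetilde\varphi(B_{t_1},\ldots,B_{t_j},B_t)$ for an explicit Gaussian convolution $\widetilde\varphi$, so $\mathcal{T}_{-b}(E[Y\mid\mathcal{F}_t]) = \widetilde\varphi(B^b_{t_1},\ldots,B^b_{t_j},B^b_t)$. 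On the other hand, $\mathcal{T}_{-b}Y = \varphi\bigl(B^b_{t_1},\ldots,B^b_{t_j},\,B^b_t+(B^b_{t_{j+1}}-B^b_t),\ldots\bigr)$, and the $\mathcal{Q}$-independence of the future $B^b$-increments from $\mathcal{F}_t$ together with their Brownian law yields $E_{\mathcal{Q}}[\mathcal{T}_{-b}Y\mid\mathcal{F}_t] = \widetilde\varphi(B^b_{t_1},\ldots,B^b_{t_j},B^b_t)$ with exactly the \emph{same} function $\widetilde\varphi$, proving $(\star)$ on the cylindrical class.

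The main obstacle is extending the resulting identity from smooth cylindrical $Y$ to arbitrary $Y \in \mathcal{L}^{q'}$. Here the stronger Novikov-type condition (\ref{novikov}) is essential: it ensures $\mathcal{E}(b) \in \bigcap_{p\ge 1}\mathcal{L}^p(\mathcal{P})$, and hence, via the H\"older--Girsanov bound (\ref{first estimate}), both sides of the desired identity depend continuously on $Y$ in the $\mathcal{L}^{q'}$-norm. Density of smooth cylindrical functionals in $\mathcal{L}^{q'}$ then completes the proof, forcing $Z(X_t) = E[Z(X_t)\mid\mathcal{F}_t]$ almost surely and giving the desired $\{\mathcal{F}_t\}$-adaptedness of the process $\{Z(X_t)\}_{0\le t\le T}$.
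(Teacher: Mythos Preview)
Your argument is correct and follows a genuinely different route from the paper. The paper tests $Z(X_t)$ against stochastic exponentials $\mathcal{E}(f)$, uses the martingale property of $\{\mathcal{E}_t(f+b)\}$ (this is precisely where the reinforced Novikov condition (\ref{novikov}) is invoked) to pass from $\mathcal{E}(f+b)$ to $\mathcal{E}_t(f+b)$, and then identifies the Wiener--It\^o chaos kernels of $Z(X_t)$ with respect to $B$ with those of $E_{\mathcal{Q}}[X_t\mid\mathcal{F}^b_t]$ with respect to $B^b$; the support property of these kernels yields adaptedness. Your approach bypasses the chaos machinery entirely: the commutation relation $(\star)$ is a direct consequence of the fact that $B^b$ is an $\{\mathcal{F}_t\}$-Brownian motion under $\mathcal{Q}$, and the rest is the tower property. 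This is more elementary and arguably more transparent. One minor remark: your claim that (\ref{novikov}) is ``essential'' for the density extension is slightly overstated. Once $Z(X_t)\in\mathcal{L}^q$ is granted (which does require (\ref{novikov}) through (\ref{first estimate})), the continuity in $Y\in\mathcal{L}^{q'}$ of both $Y\mapsto E[Z(X_t)Y]$ and $Y\mapsto E[Z(X_t)\,E[Y\mid\mathcal{F}_t]]$ follows from H\"older alone and the $\mathcal{L}^{q'}$-contractivity of conditional expectation; no further appeal to (\ref{novikov}) is needed at that stage. In fact your argument shows that, unlike in the paper's proof, the adaptedness step itself does not make any use of the reinforcement of Novikov beyond what is already needed to define $Z$.
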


\begin{proof}
We recall that the set
\begin{eqnarray*}
\{\mathcal{E}(f),f\in\mathcal{L}^2([0,T];\mathbb{R}^n)\}
\end{eqnarray*}
is total in $\mathcal{L}^q(\Omega,\mathcal{F},\mathcal{P})$ for all
$q\geq 1$ and that for any
$Y\in\mathcal{L}^2(\Omega,\mathcal{F},\mathcal{P})$,
\begin{eqnarray*}
E[Y\mathcal{E}(f)]=\sum_{k\geq 0}\langle h_k,f^{\otimes
n}\rangle_{\mathcal{L}^2([0,T]^k;\mathbb{R}^{n^k})}
\end{eqnarray*}
where the sequence $\{h_k\}_{k\geq 0}$ represents the kernels of the
Wiener-It\^o chaos decomposition of $Y$ with respect to the Brownian
motion $B$ (see e.g. \cite{HOUZ}).\\
We have
\begin{eqnarray*}
E[Z(X_t)\mathcal{E}(f)]&=&E[X_t\mathcal{E}(b)\mathcal{T}_{-b}\mathcal{E}(f)]\\
&=&E[X_t\mathcal{E}(f+b)].
\end{eqnarray*}
Observe that, as we already mentioned in Section 2, condition
(\ref{novikov}) ensures that the process
$\{\mathcal{E}_t(b+f)\}_{0\leq t\leq T}$ is a continuous
($\{\mathcal{F}_t\}_{0\leq t\leq T}$, $\mathcal{P}$)-martingale.
Therefore
\begin{eqnarray*}
E[Z(X_t)\mathcal{E}(f)]&=&E[X_t\mathcal{E}(f+b)]\\
&=&E[X_t\mathcal{E}_t(f+b)]\\
&=&E[X_t\mathcal{E}_t(b)\mathcal{T}_{-b}\mathcal{E}_t(f)]\\
&=&E[X_t\mathcal{E}(b)\mathcal{T}_{-b}\mathcal{E}_t(f)]\\
&=&E_{\mathcal{Q}}[X_t\mathcal{T}_{-b}\mathcal{E}_t(f)]\\
&=&E_{\mathcal{Q}}[E_{\mathcal{Q}}[X_t|\mathcal{F}_t^b]\mathcal{T}_{-b}\mathcal{E}_t(f)]\\
&=&E_{\mathcal{Q}}[E_{\mathcal{Q}}[X_t|\mathcal{F}_t^b]\mathcal{T}_{-b}\mathcal{E}(f)]\\
&=&\sum_{k\geq 0}\langle h_k(\cdot,t),f^{\otimes
k}\rangle_{\mathcal{L}^2([0,T]^k;\mathbb{R}^{n^k})}
\end{eqnarray*}
where $\{h_k(\cdot,t)\}_{k\geq 0}$ are the kernels of
$E_{\mathcal{Q}}[X_t|\mathcal{F}_t^b]$ with respect to the Brownian
motion $B^b$. Observe that in the seventh equality we utilized the
martingale property of $\{\mathcal{T}_{-b}\mathcal{E}_t(f)\}_{0\leq
t\leq T}$ with respect to the filtration $\{\mathcal{F}^b_t\}_{0\leq
t\leq T}$ and the measure $\mathcal{Q}$ (the role of
$\mathcal{T}_{-b}\mathcal{E}(f)$ in
$\mathcal{L}^2(\Omega,\mathcal{F},\mathcal{Q})$ is the same as the
one of $\mathcal{E}(f)$ in
$\mathcal{L}^2(\Omega,\mathcal{F},\mathcal{P})$).\\
Comparing the first and the last terms of the above chain of
equalities we deduce that $\{h_k(\cdot,t)\}_{k\geq 0}$ are also the
kernels of $Z(X_t)$ with respect to the Brownian motion $B$.\\
Since $E_{\mathcal{Q}}[X_t|\mathcal{F}_t^b]$ is
$\{\mathcal{F}_t^b\}_{0\leq t\leq T}$-adapted we get (see
\cite{HOUZ} Lemma 2.5.3) that for each $k\geq 0$,
$h_k(t_1,...,t_n,t)=0$ if $t_i>t$ for some $i\in\{1,...,n\}$. This
last condition (recall that the $h_k$'s are also the kernels of
$Z(X_t)$ with respect to the Brownian motion $B$) implies that
$Z(X_t)$ is $\{\mathcal{F}_t\}_{0\leq t\leq T}$-adapted.
\end{proof}

\begin{remark}\quad\\
Looking through the proof of Proposition \ref{adaptedness} one can
understand how the operator $Z$ acts:\\
- Take $X\in\mathcal{L}^{2+}(\Omega,\mathcal{F},\mathcal{P})$\\
- Compute the conditional expectation
$E_{\mathcal{Q}}[X|\mathcal{F}_T^b]$\\
- Write the Wiener-It\^o chaos decomposition  of
$E_{\mathcal{Q}}[X|\mathcal{F}_T^b]$ as $\sum_{k\geq 0}I^{b}_k(h_k)$
where $I^b_k$ are multiple It\^o integrals with respect to the
Brownian motion $B^b$\\
- Take the kernels from above and write the Wiener-It\^o chaos
decomposition $\sum_{k\geq 0}I_k(h_k)$ where $I_k$ are now multiple
It\^o integrals with respect to the Brownian motion $B$\\
- The result is Z(X), i.e. $Z(X)=\sum_{k\geq 0}I_k(h_k)$.
\end{remark}

\section{Proofs of the main results}

To ease the notation we will assume that $x=0$ and denote the
processes $X_t^0,Y_t^0$ and $Z_t^0$, appearing in the statement of
the theorem, by $X_t,Y_t$ and $Z_t$, respectively. Observe that the
one dimensional processes $Z_t^i$ as defined in (\ref{def of Z})
correspond in the notation of the previous section to $Z(B_t^i)$.
Therefore all the properties that we proved in the previous section
for $Z(X)$ are valid also for $Z_t^i$. We will however continue to
use the symbol $Z(B_t^i)$ when we will use the results of the previous section.\\

\noindent\textbf{Proof of Theorem \ref{main theorem}}\\

\noindent i) Adaptedness follows immediately from Proposition
\ref{adaptedness}. Let us prove the continuity. By means of
(\ref{jensen}) we have for any $i\in\{1,...,n\}$ and $s,t\in [0,T]$
that
\begin{eqnarray*}
E[|Z(B_t^i)-Z(B_s^i)|^3]&=&E[|Z(B_t^i-B_s^i)|^3]\\
&\leq&E[Z(|B_t^i-B_s^i|^3)]\\
&=&E[|B_t^i-B_s^i|^3\mathcal{E}(b)]\\
&\leq&E[|B_t^i-B_s^i|^6]^{\frac{1}{2}}E[\mathcal{E}(b)^2]^{\frac{1}{2}}\\
&=&C (|t-s|^3)^{\frac{1}{2}}\\
&=& C|t-s|^{\frac{3}{2}}.
\end{eqnarray*}
From the Kolmogorov's continuity theorem we deduce that $Z_t^i$ has
a continuous modification which we will continue to denote by
$Z_t^i$.\\

\noindent ii) Assume that $\mathcal{F}^b_t=\mathcal{F}_t$ for all
$t\in [0,T]$. This means that any functional of the path of $B$ is
also a functional of the path of $B^b$ and hence from Remark
\ref{commutation} we obtain the identity
\begin{eqnarray}\label{commutation 2}
\varphi(Z(U))=Z(\varphi(U))
\end{eqnarray}
for any measurable and bounded function
$\varphi:\mathbb{R}\to\mathbb{R}$ and
$U\in\mathcal{L}^{\infty}(\Omega,\mathcal{F},\mathcal{P})$. Actually
these hypothesis can be relaxed: in fact, looking through the proof
of identity (\ref{left inverse}) one can easily see that
(\ref{commutation 2}) holds for instance for all
$U\in\mathcal{L}^{2}(\Omega,\mathcal{F},\mathcal{P})$ and
$\varphi:\mathbb{R}\to\mathbb{R}$ such that
$\varphi(U)\in\mathcal{L}^{2}(\Omega,\mathcal{F},\mathcal{P})$. In
our case $U=B_t^i$ and $\varphi=b_i$. Since we are assuming
condition (\ref{novikov}) on the function $b$ a simple application
of the inequality $e^x\geq 1+x$ provides the condition
$E[b_i^2(B_t)]<+\infty$ $t$-$a.e.$. (The null set where the previous
condition is not fulfilled will play no role since property
(\ref{commutation 2}) will be used under the Lebesgue-integral sign).\\
Now fix a function $f\in\mathcal{L}^2([0,T];\mathbb{R}^n)$; an
application of the It\^o formula gives
\begin{eqnarray*}
E[Z_t^i\mathcal{E}(f)]&=&E[B_t^i\mathcal{E}(b)\mathcal{T}_{-b}\mathcal{E}(f)]\\
&=&E[B_t^i\mathcal{E}(f+b)]\\
&=&E\Big[B_t^i\Big(1+\int_0^T\langle
(f(s)+b(B_s))\mathcal{E}_s(f+b),dB_s\rangle\Big)\Big]\\
&=&E\Big[\int_0^t
(f_i(s)+b_i(B_s))\mathcal{E}_s(f+b)ds\Big]\\
&=&\int_0^tE[b_i(B_s)\mathcal{E}_s(f+b)]ds+\int_0^tf_i(s)ds\\
&=&\int_0^tE[b_i(B_s)\mathcal{E}(f+b)]ds+E[B_t^i\mathcal{E}(f)]\\
&=&\int_0^tE[b_i(B_s)\mathcal{E}(b)\mathcal{T}_{-b}\mathcal{E}(f)]ds+E[B_t^i\mathcal{E}(f)]\\
&=&\int_0^tE[Z(b_i(B_s))\mathcal{E}(f)]ds+E[B_t^i\mathcal{E}(f)]\\
&=&\int_0^tE[b_i(Z(B_s))\mathcal{E}(f)]ds+E[B_t^i\mathcal{E}(f)]\\
&=&E\Big[\Big(\int_0^tb_i(Z(B_s))ds+B_t^i\Big)\mathcal{E}(f)\Big].
\end{eqnarray*}
Comparing the first and the last term of this chain of equalities we
get for each $i\in\{1,...,n\}$ that
\begin{eqnarray*}
Z_t^i=\int_0^tb_i(Z_s)ds+B_t^i.
\end{eqnarray*}
This shows that the process $Z_t=(Z_t^1,...,Z^n_t)$ solves
(\ref{SDE}) in the strong sense. To prove uniqueness we simply
observe that if $\{V_t\}_{0\leq t\leq T}$ is another square
integrable, continuous, $\{\mathcal{F}_t\}_{0\leq t\leq T}$-adapted
solution to (\ref{SDE}) by applying the Girsanov theorem we get for
every $f\in\mathcal{L}^2([0,T];\mathbb{R}^n)$ and $i\in\{1...,n\}$
that
\begin{eqnarray*}
E[V^i_t\mathcal{E}(f)]=E[B_t^i\mathcal{E}(f+b)];
\end{eqnarray*}
since by definition
\begin{eqnarray*}
E[Z^i_t\mathcal{E}(f)]&=&E[B_t^i\mathcal{E}(b)\mathcal{T}_{-b}\mathcal{E}(f)]\\
&=&E[B_t^i\mathcal{E}(f+b)]
\end{eqnarray*}
we conclude that
\begin{eqnarray*}
Z_t=V_t\mbox{ for all }t\in [0,T]
\end{eqnarray*}
with probability one.\\

\noindent iii) Let us now assume that
$\mathcal{F}^b_t\subset\mathcal{F}_t$ for some $t\in [0,T]$. We can
repeat the reasoning that brought to the chain of equalities
obtained above; however now we have to stop before the interchange
between the action of the operator $Z$ and the nonlinear function
$b_i$ (in the present case this commutation is not allowed since the
Brownian motion $B$ can not be written as a functional of $B^b$).
Therefore we can write
\begin{eqnarray*}
E[Z_t^i\mathcal{E}(f)]&=&\int_0^tE[Z(b_i(B_s))\mathcal{E}(f)]ds+E[B_t^i\mathcal{E}(f)]\\
&=&E\Big[\Big(\int_0^tZ(b_i(B_s))ds+B_t^i\Big)\mathcal{E}(f)\Big]
\end{eqnarray*}
or equivalently
\begin{eqnarray}\label{before commutation}
Z_t^i=\int_0^tZ(b_i(B_s))ds+B_t^i.
\end{eqnarray}
Recall that in proving inequality (\ref{jensen}) we showed that $Z$
is a positivity preserving operator; due to the linearity of $Z$
this is equivalent to say that if $\mathcal{P}(X\leq Y)=1$ then
$\mathcal{P}(Z(X)\leq Z(Y))=1$. Therefore if we denote by
$\hat{b}_i$ the convex envelope of $b_i$ we can continue
(\ref{before commutation}) as
\begin{eqnarray*}
Z_t^i&=&\int_0^tZ(b_i(B_s))ds+B_t^i\\
&\geq&\int_0^tZ(\hat{b}_i(B_s))ds+B_t^i\\
&\geq&\int_0^t\hat{b}_i(Z(B_s))ds+B_t^i\\
&=&\int_0^t\hat{b}_i(Z_s)ds+B_t^i
\end{eqnarray*}
where in the second inequality we used (\ref{jensen}). Hence we
proved that $Z_t=(Z_t^1,...,Z_t^n)$ is a solution to the system of
stochastic differential inequalities
\begin{eqnarray*}
Z_t\geq\int_0^t\hat{b}(Z_s)ds+B_t.
\end{eqnarray*}
Now, the function $\hat{b}=(\hat{b}_1,...,\hat{b}_n)$ is a vector of
convex functions; since convex functions are locally
Lipschitz-continuous the stochastic differential equation
\begin{eqnarray*}
dY_t&=&\hat{b}(Y_t)dt+dB_t\quad t\in ]0,T]\\
Y_0&=&0
\end{eqnarray*}
possesses a unique strong solution up to the explosion time
\begin{eqnarray*}
\tau:=\lim_{N\to +\infty}\inf\{t\in[0,T]:\Vert Y_t\Vert>N\}
\end{eqnarray*}
(see e.g. \cite{IW}). If $\hat{b}:\mathbb{R}^n\to\mathbb{R}^n$
happens to be quasi-monotonously increasing then by Proposition 3.3
in \cite{AM} we conclude that
\begin{eqnarray*}
Z_t\geq Y_t\mbox{ for all }t\in [0,\tau]
\end{eqnarray*}
with probability one.\\

\noindent\textbf{Proof of Corollary \ref{corollary}}\\

\noindent First observe that for all $x\in\mathbb{R}^n$ we have
$b(x)\geq \hat{b}(x)\geq Ax+b$. Therefore following the line of
reasoning in the proof of $iii)$ above we can immediately say that
\begin{eqnarray*}
dZ_t\geq (AZ_t+b)dt+dB_t.
\end{eqnarray*}
Since the function $x\mapsto Ax+b$ is linear, the stochastic
differential equation
\begin{eqnarray*}
dY_t&=&(AY_t+b)dt+dB_t\\
Y_0&=&0
\end{eqnarray*}
possesses a unique global strong solution; this solution is
explicitly represented in the right hand side of (\ref{linear}) (see
\cite{KS} page 354). Moreover the assumption on $A$ guarantees that
the function $x\mapsto Ax+b$ is quasi-monotonously increasing. These
facts together with Proposition 3.3 in \cite{AM} implies the desired
inequality (\ref{linear}).

\vspace*{8pt}


\begin{thebibliography}{99}

\bibitem{AM}
Assing, S. and Manthey, R.: The behaviour of solutions of stochastic
differential inequalities, \emph{Probab. Theory Relat. Fields}
\textbf{103} (1995) 493-514

\bibitem{DFPR}
Da Prato, G., Flandoli, F., Priola, E. and R\"ockner, M.: Strong
uniqueness for stochastic evolution equations in Hilbert spaces
perturbed by a bounded measurable drift, to appear on \emph{Annals
Probab.}, arXiv:1109.0363v2

\bibitem{FF}
Fedrizzi, E. and Flandoli, F.: Noise prevents singularities in
linear transport equations, \emph{J. Funct. Anal.} \textbf{264}
(2013) 1329-1354

\bibitem{FGP}
Flandoli, F., Gubinelli, M. and Priola, E.: Well posedness of the
transport equation by stochastic perturbation, \emph{Invent. Math.}
\textbf{180} (2010) 1-53

\bibitem{HOUZ}
Holden, H., {\O}ksendal, B., Ub{\o}e, J. and Zhang, T.-S.:
\emph{Stochastic Partial Differential Equations- A Modeling, White
Noise Functional Approach}, Birkh\"auser, Boston 1996

\bibitem{IW}
Ikeda, N. and Watanabe, S.: \emph{Stochastic differential equations
and diffusion processes}, North Holland, Amsterdam 1981

\bibitem{KR}
Krylov, N.V. and R\"ockner, M.: Strong solutions of stochastic
equations with singular time dependent drift, \emph{Probab. Theory
Relat. Fields} \textbf{131} (2005) 154-196

\bibitem{KS}
Karatzas, I. and Shreve, S.E.: \emph{Brownian motion and stochastic
calculus, II edition}, Springer, New York 1991

\bibitem{LP}
Lanconelli, A. and Proske, F.N.: On explicit strong solutions of
It\^o-SDE's and the Donsker delta function of a diffusion,
\emph{Inf. Dim. Anal. Quantum Probab. Relat. Topics} \textbf{7}
(2004) 437-447

\bibitem{MP}
Meyer-Brandis, T. and Proske, F.N.: Construction of strong solutions
of SDE's via Malliavin calculus, \emph{J. Funct. Anal.} \textbf{258}
(2010) 3922-3953


\bibitem{Nilssen}
Nilssen, T.K.: One-dimensional SDE's with discontinuous unbounded
drift and continuously differentiable solutions of the stochastic
transport equation, Preprint University of Oslo (2012)


\bibitem{Nualart}
Nualart, D.: \emph{Malliavin calculus and Related Topics, II
edition}, Springer, New York 2006

\bibitem{O}
Oberman, A.M.: The convex envelope is the solution of a nonlinear
obstacle problem, \emph{Proceedings AMS} \textbf{135} (2007)
1689-1694

\bibitem{P}
Pilipenko, A.Y.: On existence and properties of strong solutions of
one-dimensional stochastic equations with an additive noise,
\emph{arXiv:1306.0212v1}

\bibitem{U}
\"Ust\"unel, A.S.: Entropy, invertibility and variational calculus
of adapted shifts on Wiener space, \emph{J. Funct. Anal.}
\textbf{257} (2009) 3655-3689

\bibitem{V}
Veretennikov, A.Y.: On the strong solutions of stochastic
differential equations, \emph{Theory Probab. Appl.} \textbf{24}
(1979) 354-366

\bibitem{Z}
Zvonkin, A.K.: A Transformation of the state space of a diffusion
process that removes the drift, \emph{Math. USSR (Sbornik)}
\textbf{22} (1974) 129-149

\end{thebibliography}
\end{document}